\newcommand{\RR}{\mathbb{R}}
\newcommand{\conv}{\operatorname{conv}}
\newcommand{\unitvec}{\mathbbm{e}}
\newtheorem{thm}{Theorem}[section]
\newtheorem{lem}{Lemma}
\newproof{proof}{Proof}
\begin{document}
    \title{Hidden Vertices in Extensions of Polytopes}

    \author{Kanstantsin Pashkovich}
    \address{Universit\'{e} libre de Bruxelles, D\'{e}partement de Math\'{e}matique, Boulevard du
    Triomphe, B-1050 Brussels, Belgium}
    \ead{kanstantsin.pashkovich@gmail.com}

    \author{Stefan Weltge}
    \address{Otto-von-Guericke-Universit\"at Magdeburg, Universit\"atsplatz 2, D-39106 Magdeburg, Germany}
    \ead{weltge@ovgu.de}

    \begin{abstract}
        Some widely known compact extended formulations have the property that each vertex of the corresponding extension
        polytope is projected onto a vertex of the target polytope. In this paper, we prove that for heptagons with vertices in general position none of the minimum size extensions has this property. 
Additionally, for any $ d \geq 2 $ we construct a family of $
        d $-polytopes such that at least $ \frac{1}{9} $ of all vertices of any of their minimum size extensions is not projected onto vertices.
    \end{abstract}

    \begin{keyword}
        extended formulation \sep polytope \sep minimal extension \sep projection \sep additional vertex
    \end{keyword}

    \maketitle

    \makeatletter{}\section{Introduction}

The theory of \emph{extended formulations} is a fast developing research field that has connections to many other fields
of mathematics. In its core, it deals with the concept of representing polytopes (usually having many facets or even no
known linear description) as linear projections of other polytopes (which, preferably, permit smaller linear
descriptions). Thus, polyhedral theory plays a crucial role for extended formulations establishing a natural connection
to geometry.

Recall that a \emph{polytope} is the convex hull of a finite set of points and that every polytope can be described as
the solution set of a system of finitely many linear inequalities and equations. The \emph{size} of a polytope is the
number of its facets, i.e., the minimum number of inequalities in a linear description of the polytope. An alternative
way to represent a polytope is to write it as a projection of another polytope. Concretely,
a polytope~$Q\subseteq \RR^{n}$ is called an \emph{extension} of a polytope~$P \subseteq \RR^{d}$ if the orthogonal
projection of $Q$ on the first $ d $ coordinates equals $ P $. The \emph{extension
complexity} of a polytope~$P$ is the minimum size of any extension for $P$. Here, we restrict extensions to be
polytopes, not polyhedra, as well as projections to be orthogonal, not general affine maps. This definition simplifies the representation, however does not lead to loss of generality.

As an illustration, regular hexagons (having six facets) can be written as a projection of triangular prisms (having
only five facets), see Fig.~\ref{fig:example}. It is easy to argue that such extensions are indeed of minimum size.
Another textbook example is the $d$-dimensional \emph{cross polytope}, which is the convex hull of all unit vectors $
\unitvec_i \in \RR^d $ and their negatives $ -\unitvec_i $ for $ i =1,\dotsc,d $. While the $d$-dimensional cross
polytope has $ 2^d $ facets, it can be written as the projection onto the $x$-coordinates of the polytope
\[
    \bigg\{ (x,y) \in \RR^d \times \RR^{2d} \, : \,
        x = \sum_{i=1}^d \lambda_i \unitvec_i - \sum_{i=1}^d \lambda_{d+i} \unitvec_i, \,
        \sum_{j=1}^{2d} \lambda_j = 1, \,
        \lambda_j \geq 0 \ \forall \, j=1,\dotsc,2d \bigg\},
\]
which is a $(2d-1)$-simplex (and hence has only $2d$ facets) and can also be proven to be of minimum size. Note that both
examples admit minimum size extensions whose vertices are again projected to vertices.

\begin{figure}
    \begin{center}
    \makeatletter{}\begin{tikzpicture}[scale=0.8]

        \def \cpolytop {black}
    \def \cschatten {black}
    \def \cschattenlinien {black}

    \def \oebene {0.1}
    \def \oebenelinien {0.3}
    \def \opolytop {0.4}
    \def \olinienpolytophinten {0.5}
    \def \oschatten {0.5}
    \def \oschattenlinien {0.7}

        \coordinate (O) at (0,0);

        \def \alpha {23}     \def \beta {-52} 
        \def \zoom {1.0}

        \pgfmathsetmacro{\xx}{sin(\beta)*\zoom}
    \pgfmathsetmacro{\xy}{-sin(\alpha)*cos(\beta)*\zoom}
    \pgfmathsetmacro{\yx}{cos(\beta)*\zoom}
    \pgfmathsetmacro{\yy}{sin(\alpha)*sin(\beta)*\zoom}
    \pgfmathsetmacro{\zx}{0}
    \pgfmathsetmacro{\zy}{cos(\alpha)*\zoom}

        \coordinate (x) at (\xx,\xy);
    \coordinate (y) at (\yx,\yy);
    \coordinate (z) at (\zx,\zy);

        \draw[fill=black,opacity=\oebene] (O) -- ($ 6*(x) $) -- ($ 6*(x) + 6*(y) $) -- ($ 6*(y) $) -- cycle;
    \foreach \i in {0,...,6}
        \draw[color=black,opacity=\oebenelinien] ($ (O) + \i*(y) $) -- ($ (O) + \i*(y) + 6*(x) $);
    \foreach \i in {0,...,6}
        \draw[color=black,opacity=\oebenelinien] ($ (O) + \i*(x) $) -- ($ (O) + \i*(x) + 6*(y) $);

        \coordinate (u1) at ($ (O) + 4*(x) + 2*(y) + 2*(z) $);
    \coordinate (u2) at ($ (O) + 4*(x) + 4*(y) + 2*(z) $);
    \coordinate (u3) at ($ (O) + 2*(x) + 4*(y) + 2*(z) $);
    \coordinate (u4) at ($ (O) + 2*(x) + 2*(y) + 2*(z) $);
    \coordinate (o1) at ($ (O) + 5*(x) + 3*(y) + 3.5*(z) $);
    \coordinate (o2) at ($ (O) + 1*(x) + 3*(y) + 3.5*(z) $);

        \coordinate (s1) at ($ (O) + 5*(x) + 3*(y) $);
    \coordinate (s2) at ($ (O) + 4*(x) + 4*(y) $);
    \coordinate (s3) at ($ (O) + 2*(x) + 4*(y) $);
    \coordinate (s4) at ($ (O) + 1*(x) + 3*(y) $);
    \coordinate (s5) at ($ (O) + 2*(x) + 2*(y) $);
    \coordinate (s6) at ($ (O) + 4*(x) + 2*(y) $);

        \draw[opacity=\olinienpolytophinten] (u1) -- (u4) -- (u3);
    \draw[opacity=\olinienpolytophinten] (u4) -- (o2);

        \fill[color=\cpolytop,opacity=\opolytop] (o1) -- (u1) -- (u2) -- (u3) -- (o2) -- cycle;

        \draw[thick] (o1) -- (u1) -- (u2) -- (u3) -- (o2) -- cycle;
    \draw[thick] (o1) -- (u2);

        \fill[color=\cschatten,opacity=\oschatten] (s1) -- (s2) -- (s3) -- (s4) -- (s5) -- (s6) -- cycle;

\end{tikzpicture}
 
    \end{center}
    \caption{A hexagon (shadow) as a projection of a triangular prism.}
    \label{fig:example}
\end{figure}
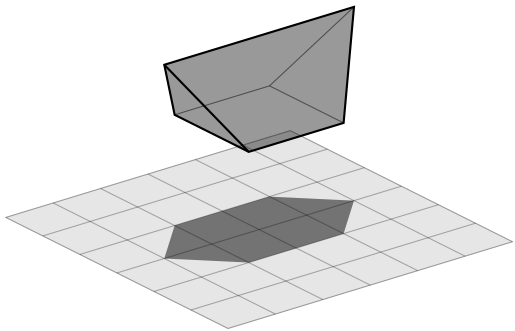

In fact, many widely known extended formulations have the property that every vertex of the corresponding
extension is projected onto a vertex of the target polytope. See, for instance, extended formulations for the
parity polytope~\cite{Yannakakis91,CarrKonjevod2005},
the permutahedron (as a projection of the Birkhoff-polytope),
the cardinality indicating polytope~\cite{Pashkovich},
orbitopes \cite{FaenzaK09},
or spanning tree polytopes of planar graphs~\cite{Williams2002}.
Although there are not many polytopes whose extension complexity is known exactly, most of the mentioned extensions have
minimum size at least up to a constant factor.
Moreover, for many of these extensions there is even a one-to-one correspondence between the vertices of the extension
and the vertices of the target polytope.
Clearly, a general
extension might have vertices that are not projected onto vertices. Here, let us call such vertices to be \emph{hidden
vertices}. The following natural question arises: Given a polytope $P$, can we always find a minimum size extension $Q$
of $ P $ that has no hidden vertices?

In this paper, we negatively answer the above question. Namely, we prove that for almost all heptagons, every minimal
extension has at least one hidden vertex. Later we extend this result and construct a family of $d $-polytopes, $ d \geq
2 $, such that  at least $ \frac{1}{9} $ of all vertices in any minimum size extension are hidden.

Thus,  in this paper we show that there are polytopes for which the minimum size of an extension without hidden vertices is strictly bigger than their extension complexity. Consider the open question: How big can be the difference between the minimum size of extensions without hidden vertices and the extension complexity? This paper demonstrates that the difference can be at least one in the context of the above question. However, the above question remains open and as the next step to study hidden vertices we propose the following question: Is there a polynomial $q:\RR\rightarrow \RR$ such that for every polytope the minimum size of its extension without hidden vertices is at most $q(s)$, where $s$ is the extension complexity of the polytope?
 
    \makeatletter{}\section{Minimum Extensions of Heptagons}

In this section, we consider convex polygons with seven vertices taken in general position. For such polygons, we prove
that there is no extension of minimum size such that every vertex of the
extension is projected onto a vertex of the polygon.

\subsection{Extension Complexity of Heptagons}
Let us briefly recall known facts about extensions of heptagons. In 2013, Shitov \cite{Shitov14} showed that the
extension complexity of any convex heptagon is at most $ 6 $. Further, it is easy to see that any affine image of a
polyhedron with only $ 5 $ facets has at most $ 6 $ vertices. Thus, one obtains:

\begin{thm}[Shitov \cite{Shitov14}]
    \label{thm:extension_complexity_heptagon}
    The extension complexity of any convex heptagon is $ 6 $.
\end{thm}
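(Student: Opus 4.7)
The theorem has two inequalities: the upper bound that the extension complexity of any convex heptagon is at most $6$ is taken from Shitov's construction \cite{Shitov14}, so the plan is just to cite it. The substantive part to prove is the matching lower bound: that no convex heptagon can be written as an (orthogonal) projection of a polytope with only $5$ facets. For this I would rely on the remark already made in the excerpt that any affine image of a polyhedron with $5$ facets has at most $6$ vertices, since projections are affine, and since the vertices of the image polytope are images of vertices of the preimage. So the whole task boils down to bounding the number of vertices of a polytope with $5$ facets by $6$.

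The plan is to do a short case analysis on the dimension $n$ of a polytope $Q\subseteq\RR^{n}$ with exactly $5$ facets. A $d$-dimensional polytope needs at least $d+1$ facets, which immediately rules out $n\geq 5$. If $n=4$, then $Q$ is a $4$-simplex, and thus has $5$ vertices. If $n=2$, then $Q$ is a pentagon with $5$ vertices. The only interesting case is $n=3$: here I would invoke Euler's formula $v-e+f=2$ combined with the incidence inequalities $2e\geq 3v$ and $2e\geq 3f$, which for $f=5$ yield $e\leq \tfrac{3}{2}(v+f)-\cdots$; carrying out the short linear computation produces $v\leq 2f-4 = 6$ (this is exactly the Upper Bound Theorem for $3$-polytopes). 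Thus in every dimension a polytope with $5$ facets has at most $6$ vertices.

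To finish, I would observe that if $\pi\colon\RR^{n}\to\RR^{2}$ is an orthogonal projection and $Q\subseteq\RR^{n}$ is any polytope, then every vertex of $\pi(Q)$ is the image under $\pi$ of at least one vertex of $Q$; consequently $\pi(Q)$ has at most as many vertices as $Q$. Applying this to an extension $Q$ of a heptagon $P$ with at most $5$ facets gives $|\operatorname{vert}(P)|\leq |\operatorname{vert}(Q)|\leq 6$, contradicting that $P$ has $7$ vertices. Combined with the upper bound from \cite{Shitov14}, this proves $\xc(P)=6$.

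The only mildly delicate step is the $3$-dimensional case, since it uses the Upper Bound Theorem for $3$-polytopes; but this is classical and the argument via Euler's formula is short and self-contained, so I would write it out in one or two lines rather than invoking the general theorem. Everything else is immediate.
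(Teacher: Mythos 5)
Your proposal is correct and matches the paper's approach: the upper bound is cited from Shitov, and the lower bound rests on the observation (which the paper states without detail, and you fill in via the dimension case analysis and Euler's formula) that any polytope with at most $5$ facets has at most $6$ vertices, so its projection cannot be a heptagon. The only cosmetic point is that your case analysis treats exactly $5$ facets while the final application needs at most $5$, but the smaller cases are trivial.
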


\noindent
While Shitov's proof is purely algebraic, independently, Padrol and Pfeifle \cite{PadrolP14} established a geometric proof
of this fact. In fact, they showed that any convex heptagon can be written as the projection of a $ 3 $-dimensional
polytope with $ 6 $ facets. In order to get an idea of such a polytope, let us consider the following construction
(which is a dual interpretation of the ideas of Padrol and Pfeifle):

Let $ P $ be a convex heptagon with vertices $ v_1,\dotsc,v_7 $ in cyclic order. For $ i \in \{ 2,3,5,6,7 \} $ let us set $ w_i :=
(v_i,0) \in \RR^3 $. Further, choose some numbers $ z_1,z_4 > 0 $ such that $ w_1 := (v_1,z_1) $, $ w_4 := (v_4,z_4)
$, $ w_2 $ and $ w_3 $ are contained in one hyperplane and consider $ Q' := \conv(\{w_1,\dotsc,w_7\}) $. It can be
shown \cite{PadrolP14} that (by possibly shifting the vertices' labels) one may assume that the convex hull of $ w_1 $,
$ w_4 $ and $ w_6 $ forms a facet $ F $ of $ Q' $. In this case, remove the defining inequality of $ F $ from an
irredundant outer description of $ Q' $ and obtain a $ 3 $-dimensional polytope $ Q $ with only $ 6 $ facets whose projection
is still $ P $. For an illustration, see Fig. \ref{fig:extension_heptagon}. Note that removing the facet $ F $ results
in an additional vertex that projects into the interior of $ P $.

In what follows, our argumentation does not rely on the construction described above but only on the statement of
Theorem \ref{thm:extension_complexity_heptagon}. However, the previous paragraph gives an intuition why additional
vertices may help in order to reduce the number of facets of an extension.

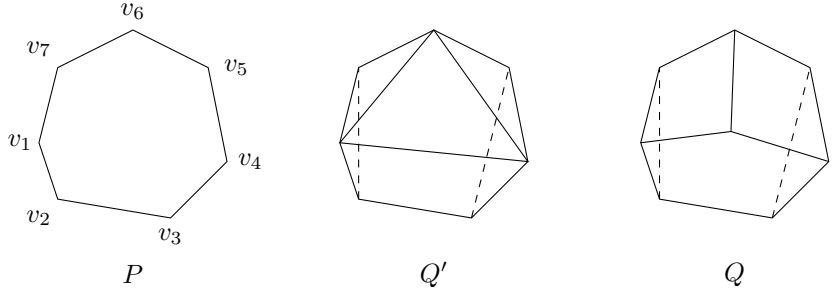
\begin{figure}
    \centering
    \makeatletter{}\begin{tikzpicture}[scale=0.25]
        \coordinate (p1) at (1,5);
    \coordinate (p2) at (2,2);
    \coordinate (p3) at (8,1);
    \coordinate (p4) at (11,4);
    \coordinate (p5) at (10,9);
    \coordinate (p6) at (6,11);
    \coordinate (p7) at (2,9);

    \draw (p1) -- (p2) -- (p3) -- (p4) -- (p5) -- (p6) -- (p7) -- cycle;
    \node at (6,-2) { $ P $ };
    \node at (0,5) { $ v_1 $ };
    \node at (1,1) { $ v_2 $ };
    \node at (8,0) { $ v_3 $ };
    \node at (12.25,4) { $ v_4 $ };
    \node at (11.5,9) { $ v_5 $ };
    \node at (6,12) { $ v_6 $ };
    \node at (1,10) { $ v_7 $ };

        \coordinate (q1) at (17,5);
    \coordinate (q2) at (18,2);
    \coordinate (q3) at (24,1);
    \coordinate (q4) at (27,4);
    \coordinate (q5) at (26,9);
    \coordinate (q6) at (22,11);
    \coordinate (q7) at (18,9);

    \draw[dashed] (q2) -- (q7);
    \draw[dashed] (q3) -- (q5);
    \draw (q1) -- (q2) -- (q3) -- (q4) -- (q5) -- (q6) -- (q7) -- cycle;
    \draw (q1) -- (q4) -- (q6) -- cycle;
    \node at (22,-2) { $ Q' $};

        \coordinate (f1) at (33,5);
    \coordinate (f2) at (34,2);
    \coordinate (f3) at (40,1);
    \coordinate (f4) at (43,4);
    \coordinate (f5) at (42,9);
    \coordinate (f6) at (38,11);
    \coordinate (f7) at (34,9);
    \coordinate (m) at (37.8,5.6);

    \draw[dashed] (f2) -- (f7);
    \draw[dashed] (f3) -- (f5);
    \draw (f1) -- (f2) -- (f3) -- (f4) -- (f5) -- (f6) -- (f7) -- cycle;
    \draw (f1) -- (m) -- (f4);
    \draw (m) -- (f6);
    \node at (38,-2) { $ Q $};
\end{tikzpicture}
 
    \caption{Example of the construction of a $ 3 $-dimensional extension $ Q $ with $ 6 $ facets for a heptagon $ P $.}
    \label{fig:extension_heptagon}
\end{figure}

\subsection{Additional Vertices of Minimum Size Extensions of Heptagons}

In this section, we will show that most convex heptagons force minimum size extensions to have at least one vertex that is
not projected onto a vertex. In order to avoid singular cases in which it is possible to construct minimum size extensions
without additional vertices, we only consider convex heptagons $ P $ that satisfy the following three conditions:

\begin{enumerate}
    \item \label{cond:parallel} There are no four pairwise distinct vertices $ u_1, \dotsc, u_4 $ of $ P $ such that the
    lines $ \overline{u_1 u_2} $, $ \overline{u_3 u_4} $ are parallel.
    \item \label{cond:threeintersect} There are no six pairwise distinct vertices $ u_1, \dotsc, u_6 $ of $ P $, such that the
    lines $ \overline{u_1 u_2} $, $ \overline{u_3 u_4} $, $ \overline{u_5 u_6} $ have a point common to all three of them.
    \item \label{cond:special} There are no seven pairwise distinct vertices $u_1, \dotsc, u_7 $ of $ P $ such that the
    intersection points $ \overline{u_1 u_2} \cap \overline{u_3 u_4} $, $ \overline{u_2 u_5} \cap \overline{u_4 u_6} $ and $
    \overline{u_3 u_7} \cap \overline{u_1 u_5} $ lie in the same line.
\end{enumerate}

\noindent Here, a convex heptagon $ P $ is called to be \emph{in general position}, if it satisfies conditions
(\ref{cond:parallel})--(\ref{cond:special}). We are now ready to state our main result:

\begin{thm}
    \label{thm:additional_vertices_heptagon}
    Let $ P $ be a convex heptagon in general position. Then any minimum size extension of $ P $ has a vertex that is not
    projected onto a vertex of $ P $.
\end{thm}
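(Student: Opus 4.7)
The plan is a proof by contradiction. Suppose that $Q$ is a minimum-size extension of $P$ in which every vertex is projected onto a vertex of $P$. By Theorem \ref{thm:extension_complexity_heptagon}, $Q$ has exactly six facets. Under the no-hidden-vertex assumption, the vertices of $Q$ partition into seven nonempty fibers $V_1, \ldots, V_7$ with $\pi(V_i) = \{v_i\}$, and every facet of $Q$ projects to a convex polygon whose vertex set is a subset of $\{v_1, \ldots, v_7\}$.

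The first step is a pigeonhole observation on edges. For each edge $e_i = [v_i,v_{i+1}]$ of $P$, the set $G_i := Q \cap \pi^{-1}(e_i)$ is a proper face of $Q$, since $e_i$ is a proper exposed face of $P$; hence $G_i$ is contained in at least one facet $F_{\sigma(i)}$ of $Q$. As $\sigma$ maps the seven edges of $P$ into only six facets of $Q$, some facet of $Q$ must be responsible for at least two distinct edges of $P$.

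The core second step uses conditions (\ref{cond:parallel}) and (\ref{cond:threeintersect}) to classify how such a facet can look. A facet $F$ of $Q$ whose projection contains two edges $e_i, e_j$ of $P$ forces $\pi(F)$ to be a convex polygon with $e_i$ and $e_j$ among its sides and with remaining sides that are chords of $P$ joining pairs of vertices of $P$. Tracking these chords together with the facets required to cover the remaining edges of $P$, an exhaustive case analysis shows that almost every combinatorial possibility produces either two parallel chords, which is forbidden by (\ref{cond:parallel}), or three chords meeting at a common point, which is forbidden by (\ref{cond:threeintersect}). The only surviving combinatorial type corresponds, after a suitable relabeling $u_1, \ldots, u_7$ of the seven vertices of $P$, to the six facet-projections being exactly the six lines $\overline{u_1 u_2}$, $\overline{u_3 u_4}$, $\overline{u_2 u_5}$, $\overline{u_4 u_6}$, $\overline{u_3 u_7}$, $\overline{u_1 u_5}$ appearing in condition (\ref{cond:special}).

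The last step translates the polyhedral structure of $Q$ back into the plane. Under the surviving incidence pattern, the three pairs of facets that project to $\overline{u_1 u_2}$ and $\overline{u_3 u_4}$, to $\overline{u_2 u_5}$ and $\overline{u_4 u_6}$, and to $\overline{u_3 u_7}$ and $\overline{u_1 u_5}$ share common edges of $Q$ that lie in a single affine $2$-flat of the ambient space of $Q$; projecting these three shared edges down to $\RR^2$ shows that the three intersection points $\overline{u_1 u_2} \cap \overline{u_3 u_4}$, $\overline{u_2 u_5} \cap \overline{u_4 u_6}$, and $\overline{u_3 u_7} \cap \overline{u_1 u_5}$ lie on a common line, in direct contradiction with condition (\ref{cond:special}). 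The main obstacle is the exhaustive case analysis in the second step: isolating the unique incidence pattern of $Q$ compatible with conditions (\ref{cond:parallel}) and (\ref{cond:threeintersect}) is the combinatorial heart of the argument, whereas once this pattern is identified, the final planar collinearity violating (\ref{cond:special}) follows routinely from an affine dependence in $Q$.
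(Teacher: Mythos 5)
Your setup (contradiction, Theorem~\ref{thm:extension_complexity_heptagon} giving a six-facet extension $Q$, and the aim of contradicting the general-position conditions) agrees with the paper, but there is a genuine gap: the step you yourself identify as the combinatorial heart --- the ``exhaustive case analysis'' showing that conditions (\ref{cond:parallel}) and (\ref{cond:threeintersect}) eliminate every incidence pattern except the one of condition (\ref{cond:special}) --- is only announced, never performed. Nothing in the proposal explains how the pigeonhole observation (some facet of $Q$ lies over two edges of $P$) leads to a finite, checkable list of configurations, nor why exactly one pattern survives. The paper obtains a finite list by a different route: it first pins down $\dim Q$ (dimension $5$ is impossible because a $5$-simplex has only $6$ vertices while any extension of a heptagon needs at least $7$), and then enumerates the combinatorial types of polytopes with $6$ facets and at least $7$ vertices --- four types in dimension $3$ (the Britton--Dunitz catalogue) and, in dimension $4$, the simplex-cut-by-a-halfspace types indexed by the pairs $(k,t)$. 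Your sketch never discusses $\dim Q$ at all, and its geometric language (facet projections that are polygons with chords of $P$ as sides, shared edges of facet pairs lying in a common $2$-flat) is implicitly three-dimensional; for a $4$-dimensional $Q$ the facets are $3$-polytopes and this bookkeeping does not directly apply.

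A second problem is that conditions (\ref{cond:parallel}) and (\ref{cond:threeintersect}) alone do not suffice for the elimination you assert. In the paper's analysis two further ingredients are essential: Lemma~\ref{lem:three_points} (four vertices of $Q$ with exactly one coincident pair of projections must span a $3$-space), which disposes of the $8$-vertex types A and B, and Lemma~\ref{lem:no_prism} (no six vertices with at most one coincident pair of projections form a triangular prism), which disposes of type C and of every $4$-dimensional case. Moreover, in the one configuration where condition (\ref{cond:special}) is finally invoked (type D), the six lines $\overline{u_1u_2}, \overline{u_3u_4}, \overline{u_2u_5}, \overline{u_4u_6}, \overline{u_3u_7}, \overline{u_1u_5}$ arise as projections of certain edges of two facets of $Q$, not as the facet projections themselves, so your description of the surviving pattern is not quite accurate --- although your closing collinearity argument (the three intersection points lie in the intersection of two planes, hence on a line) is indeed the paper's. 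As it stands, the proposal is a plausible plan rather than a proof: the decisive classification is missing, and the tools named are not enough to carry it out.
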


\noindent
From now on, let us fix a convex heptagon $ P $ that is in general position. In order to prove Theorem
\ref{thm:additional_vertices_heptagon}, let us assume, for the sake of contradiction, that there exists a polytope $ Q $ with
only six facets such that $ Q $ is an extension of $ P $ and every vertex of $ Q $ is projected onto a vertex of $ P $.
Towards this end, let us first formulate two Lemmas, which we will extensively use through the whole consideration.

\begin{lem}
    \label{lem:three_points}
    Let $ w_1,\dotsc,w_4 $ be four pairwise distinct vertices of $ Q $ such that exactly one pair of them is projected onto the
    same vertex of $ P $. Then, the dimension of the affine space generated by $w_1$, \ldots ,$w_4$  equals $3$.
\end{lem}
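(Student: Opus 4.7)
The plan is to proceed by contradiction. Assume that $\dim \mathrm{aff}\{w_1,w_2,w_3,w_4\} \le 2$, and without loss of generality let $(w_1,w_2)$ be the unique pair whose images in $P$ coincide, so that $v := \pi(w_1) = \pi(w_2)$ is a vertex of $P$. Setting $v_3 := \pi(w_3)$ and $v_4 := \pi(w_4)$, the ``exactly one pair'' hypothesis forces $v,v_3,v_4$ to be three pairwise distinct vertices of $P$ (otherwise, some other pair among the $w_i$ would also map to a common vertex).

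Next I would let $H$ denote the affine hull of $\{w_1,\ldots,w_4\}$, so that $\dim H \le 2$, and consider the restriction $\pi|_H$. This restriction is an affine map sending two distinct points $w_1 \neq w_2$ to the same image $v$, so its kernel direction is nontrivial and hence
\[
    \dim \pi(H) \;\le\; \dim H - 1 \;\le\; 1.
\]
Thus $\pi(H)$ is an affine subspace of $\RR^2$ of dimension at most $1$ containing the three pairwise distinct points $v, v_3, v_4$. Consequently these three vertices of $P$ must lie on a common line.

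Finally, I would invoke the fact that three vertices of a convex polygon are never collinear (any such middle point would fail to be a vertex), contradicting the above. This completes the proof. I do not expect to need any of the general position conditions (\ref{cond:parallel})--(\ref{cond:special}); only the basic convex-polygon fact that no three vertices of $P$ are collinear is required. The only subtle step is step one, extracting distinctness of $v,v_3,v_4$ from the ``exactly one pair'' hypothesis, but this is essentially a case check over the six pairs of indices.
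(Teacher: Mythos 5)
Your proposal is correct and follows essentially the same route as the paper: assume the affine hull has dimension at most $2$, observe that the projection collapses a direction (since $w_1$ and $w_2$ map to the same point), so the projected affine hull has dimension at most $1$, forcing three distinct vertices of $P$ onto a line, which is impossible for a convex polygon. Your extra care in checking that the three projected vertices are pairwise distinct, and your remark that none of the general position conditions are needed here, are both consistent with (and slightly more explicit than) the paper's argument.
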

\begin{proof}
    Let us assume the contrary and let $ w_1,\dotsc,w_4 $ be such vertices of $Q$ that the dimension of the corresponding affine space is at most $2$. Then, the dimension of the affine space generated
    by the projections of $ w_1,\dotsc,w_4 $ is at most one since it is the projection of the affine space generated by $ w_1,\dotsc,w_4 $, while two distinct points in this space are projected onto the same point. This implies that the projections of  $w_1$, \ldots, $w_4$, and thus three different vertices of $ P $, lie on the same line, a contradiction.
\end{proof}

\begin{lem}
    \label{lem:no_prism}
    There are no six vertices $ w_1,\dotsc,w_6 $ of $ Q $ such that
    \begin{itemize}
        \item[--] at most one pair of them is projected onto the same point, and
        \item[--] $ \conv(\{w_1,\dotsc,w_6\}) $ is a triangular prism.
    \end{itemize}
\end{lem}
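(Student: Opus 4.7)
The plan is to exploit the rigidity of a triangular prism: its two triangular facets are translates of one another, so under the projection onto the plane of $P$ the three ``vertical'' edges of the prism project to three parallel (possibly degenerate) segments in $\RR^2$. I would assume for contradiction that six such vertices $w_1,\dotsc,w_6$ of $Q$ exist, label them so that $\{w_1,w_2,w_3\}$ and $\{w_4,w_5,w_6\}$ are the two triangular facets with $w_4-w_1=w_5-w_2=w_6-w_3$, and set $t:=\pi(w_4)-\pi(w_1)\in\RR^2$, where $\pi$ denotes the projection.

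The first split is on whether $t=0$. If $t=0$, then $\pi(w_i)=\pi(w_{i+3})$ for each $i\in\{1,2,3\}$, giving three coinciding pairs and contradicting the hypothesis that at most one pair coincides. Hence $t\neq 0$, so the three lines $\ell_i:=\overline{\pi(w_i)\pi(w_{i+3})}$ are parallel with common direction $t$.

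The second split is on how many pairs among $\pi(w_1),\dotsc,\pi(w_6)$ coincide. If no pair coincides, then $\pi(w_1),\pi(w_4),\pi(w_2),\pi(w_5)$ are four pairwise distinct vertices of $P$ with $\ell_1\parallel\ell_2$, directly violating condition~(\ref{cond:parallel}). If exactly one pair $\pi(w_a)=\pi(w_b)$ coincides, I would distinguish whether $\{a,b\}=\{i,i+3\}$ for some $i$---which would force $t=0$ and contradict the current case---or whether $a$ and $b$ lie on two different vertical edges. In the latter situation, the two parallel lines $\ell_i$ and $\ell_j$ pass through the common point $\pi(w_a)=\pi(w_b)$ and hence coincide, so the projections of the other two endpoints of these two edges, together with $\pi(w_a)$, give three pairwise distinct collinear vertices of $P$, incompatible with $P$ being a convex polygon.

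I expect the main obstacle to be keeping the case distinction clean so that every position of the (at most one) coinciding pair relative to the three vertical edges is covered, and checking in the last sub-case that the forced coincidence of $\ell_i$ and $\ell_j$ really produces \emph{three} pairwise distinct collinear vertices of $P$ rather than only two. As an alternative endgame for the ``one pair coincides'' sub-case, one could instead invoke Lemma~\ref{lem:three_points} applied to the quadrilateral facet of the prism that contains the coinciding pair: such a facet is coplanar, while the lemma would force its four vertices to be affinely independent in $\RR^3$.
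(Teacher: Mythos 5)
Your argument hinges on the assumption that the two triangular facets of the prism are translates of one another, so that the three lateral edges satisfy $w_4-w_1=w_5-w_2=w_6-w_3$ and project to parallel segments. This is only true for a \emph{metric} prism, whereas in this lemma (and in every place where it is later applied) ``triangular prism'' must be read combinatorially: a $3$-polytope with two triangular and three quadrilateral facets. For instance, in the four-dimensional case analysis the lemma is invoked for facets of the form $F\cap H$, where $F$ is a facet of a $4$-simplex truncated by a half-space; such a polytope is a frustum-like combinatorial prism whose three lateral edge lines are generically \emph{concurrent} (think of a tetrahedron with one vertex cut off), not parallel. For such prisms your vector $t$ and the parallel lines $\ell_1,\ell_2,\ell_3$ simply do not exist, and the case ``no pair of projections coincides'' yields no violation of condition~(\ref{cond:parallel}); the whole main branch of your argument collapses. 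The paper instead uses only the combinatorial structure: within each planar quadrilateral facet the two lateral edge lines are coplanar, hence by condition~(\ref{cond:parallel}) they meet in a unique point; since two of these intersection points lie in the plane of the third quadrilateral facet, which meets the line $\overline{w_3w_4}$ in at most one point, all three lateral edge lines pass through one common point, so their projections are three concurrent lines through six distinct vertices of $P$, contradicting condition~(\ref{cond:threeintersect}) --- a condition your proof never needs, which is a sign that it is proving a strictly weaker statement.

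Your treatment of the case with exactly one coinciding pair is closer to the mark: the ``alternative endgame'' you mention (every pair of vertices of a combinatorial triangular prism lies in a common planar quadrilateral facet, whose four vertices then contradict Lemma~\ref{lem:three_points}) is essentially the paper's argument for that case and does not rely on the translate assumption, although the paper phrases it via the preimage of the common vertex being an edge of the prism. So the coincidence case can be repaired, but the generic case needs the concurrency argument sketched above rather than parallelism; as written, the proposal does not prove the lemma in the generality in which it is used.
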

\begin{proof}
    Let $ w_1,\dotsc,w_6 $ be any six vertices of $ Q $ that form a triangular prism $ \Gamma $.
    We claim that two of them are projected onto the same point.
    Otherwise, label the vertices of $ \Gamma $ as in Fig.~\ref{fig:prism}.
    \begin{figure}
        \centering
        \makeatletter{}\begin{tikzpicture}[scale=0.6]
    \coordinate (w1) at (-2,-2.5);
    \coordinate (w2) at (2,-2);
    \coordinate (w3) at (-1,-1);
    \coordinate (w4) at (1,-1);
    \coordinate (w5) at (-2,1);
    \coordinate (w6) at (2,0);
    \draw (w1)--(w2)--(w4)--(w3)--(w5)--(w6)--(w2);
    \draw (w4)--(w6);
    \draw (w3)--(w1)--(w5);
    \node[below, left] at (w1) {$w_1$};
    \node[left] at (w3) {$w_3$};
    \node[above, left] at (w5) {$w_5$};
    \node[below, right] at (w2) {$w_2$};
    \node[right] at (w4) {$w_4$};
    \node[above, right] at (w6) {$w_6$};
\end{tikzpicture}
 
        \caption{Labeling of the vertices of $ \Gamma $.}
        \label{fig:prism}
    \end{figure}
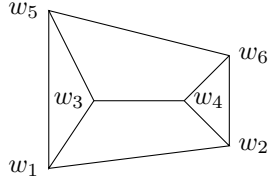
    The lines $ \overline{w_1 w_2} $ and $ \overline{w_3 w_4} $ are not skew, since the points $ w_1 $, $ w_2 $, $ w_3
    $, $ w_4 $ lie in the same facet of $\Gamma$.
    By condition~(\ref{cond:parallel}), the lines $ \overline{w_1 w_2} $ and $ \overline{w_3 w_4} $ are not parallel,
    since otherwise the lines $\overline{u_1 u_2}$ and $\overline{u_3 u_4}$ are parallel, where $ u_i $ denotes the
    projection of $ w_i $ for $ i = 1,\ldots,6 $.
    Thus, the lines $ \overline{w_1 w_2} $ and $ \overline{w_3 w_4} $ have a unique common point.
    Analogously, the lines $ \overline{w_3 w_4} $ and $ \overline{w_5 w_6} $ have a unique common point.
    Note, that the points $ \overline{w_1 w_2} \cap \overline{w_3 w_4} $ and $ \overline{w_3 w_4} \cap \overline{w_5
    w_6} $ lie in the hyperplane corresponding to the facet of $ \Gamma $ containing $ w_1 $, $ w_2 $, $ w_5 $ and $ w_6
    $, since the lines $ \overline{w_1 w_2} $ and $ \overline{w_5 w_6} $ lie in this hyperplane.
    Since the line $ \overline{w_3 w_4} $ is not contained in this hyperplane, it has at most one common point with this
    hyperplane, showing that $ \overline{w_1 w_2} \cap \overline{w_3 w_4} = \overline{w_3 w_4} \cap \overline{w_5 w_6}
    $.
    Hence, the lines $ \overline{u_1 u_2} $, $ \overline{u_3 u_4} $ and $ \overline{u_5 u_6} $ have a point common to
    all three of them, which contradicts condition~(\ref{cond:threeintersect}).

    Suppose now that exactly one pair of vertices of~$ \Gamma $ is projected onto the same point.
    Let us denote this point by~$ u $.
    Since $ u $ is a vertex of the projection of $ \Gamma $, the set of all points of $ \Gamma $ that project onto $ u
    $ forms a face of $ \Gamma $.
    This face contains exactly two vertices of $ \Gamma $, which therefore have to share an edge of $ \Gamma $.
    But any edge of $ \Gamma $ is contained in a two-dimensional face of $ \Gamma $ with four vertices, a contradiction
    to Lemma~\ref{lem:three_points}.
\end{proof}

\noindent
Since the polytope $ Q $ has only $ 6 $ facets, its dimension cannot exceed $ 5 $. In the case $ \dim(Q) = 5 $, the
polytope $ Q $ would be a simplex and hence would have only $ 6 $ vertices. Note that any polytope that projects down to
a convex heptagon must have at least $ 7 $ vertices. Thus, we have to consider the remaining two cases $ \dim(Q) = 3 $
and $ \dim(Q) = 4 $.

\makeatletter{}\subsubsection{Three-Dimensional Extensions}

In dimension $ 3 $, there are four combinatorial types of polytopes with $ 6 $ facets and at least $ 7 $ vertices (see \cite{BrittonD73}),
which are illustrated in Fig. \ref{fig:three_dim}.

\makeatletter{}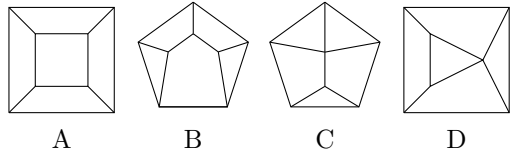
\begin{figure}[H]
    \centering
    \begin{tikzpicture}[scale=0.35]
        \begin{scope}
            \coordinate (w1) at (-2,-2);
            \coordinate (w2) at (-2,2);
            \coordinate (w3) at (2,2);
            \coordinate (w4) at (2,-2);
            \coordinate (w5) at (-1,-1);
            \coordinate (w6) at (-1,1);
            \coordinate (w7) at (1,1);
            \coordinate (w8) at (1,-1);
            \draw (w1)--(w2)--(w3)--(w4)--(w1);
            \draw (w5)--(w6)--(w7)--(w8)--(w5);
            \draw (w1)--(w5);
            \draw (w2)--(w6);
            \draw (w3)--(w7);
            \draw (w4)--(w8);
            \node at (0,-3) {A};
        \end{scope}

        \begin{scope}[shift={(5,0)}]
            \coordinate (w1) at (90:2.2) + (4,0);
            \coordinate (w2) at (360/5+90:2.2) + (4,0);
            \coordinate (w3) at (2*360/5+90:2.2) + (4,0);
            \coordinate (w4) at (3*360/5+90:2.2) + (4,0);
            \coordinate (w5) at (4*360/5+90:2.2) + (4,0);
            \coordinate (w6) at (90:1) + (4,0);
            \coordinate (w7) at (360/5+90:1) + (4,0);
            \coordinate (w8) at (4*360/5+90:1) + (4,0);
            \draw (w1)--(w2)--(w3)--(w4)--(w5)--(w1);
            \draw (w6)--(w7)--(w3)--(w4)--(w8)--(w6);
            \draw (w1)--(w6);
            \draw (w2)--(w7);
            \draw (w5)--(w8);
            \node at (0,-3) {B};
        \end{scope}

        \begin{scope}[shift={(10,0)}]
            \coordinate (w1) at (90:2.2);
            \coordinate (w2) at (360/5+90:2.2);
            \coordinate (w3) at (2*360/5+90:2.2);
            \coordinate (w4) at (3*360/5+90:2.2);
            \coordinate (w5) at (4*360/5+90:2.2);
            \coordinate (w6) at (0, .3);
            \coordinate (w7) at (0,-1);
            \draw (w1)--(w2)--(w3)--(w4)--(w5)--(w1);
            \draw (w6)--(w7);
            \draw (w6)--(w1);
            \draw (w6)--(w2);
            \draw (w6)--(w5);
            \draw (w7)--(w3);
            \draw (w7)--(w4);
            \node at (0,-3) {C};
        \end{scope}

        \begin{scope}[shift={(15,0)}]
            \coordinate (w1) at (-2,-2);
            \coordinate (w2) at (-2,2);
            \coordinate (w3) at (2,2);
            \coordinate (w4) at (2,-2);
            \coordinate (w5) at (-1,-1);
            \coordinate (w6) at (-1,1);
            \coordinate (w7) at (1,0);
            \draw (w1)--(w2)--(w3)--(w4)--(w1);
            \draw (w5)--(w6)--(w7)--(w5);
            \draw (w1)--(w5);
            \draw (w2)--(w6);
            \draw (w3)--(w7);
            \draw (w4)--(w7);
            \node at (0,-3) {D};
        \end{scope}
    \end{tikzpicture}
    \caption{Combinatorial types of three dimensional polytopes with at most $6$ facets and at least $7$ vertices.}
    \label{fig:three_dim}
\end{figure}

\begin{itemize}
    \item If $ Q $ is of type $ A $ or $ B $, then it has $ 8 $ vertices of which exactly one pair of them is projected onto
    the same vertex $ u $ of $ P $.
    Thus, the preimage of $ u $ induces a face of $ Q $ containing exactly two vertices of $ Q $, hence these two
    vertices must share an edge.
    Since any edge of $ Q $ is contained in a $ 2 $-dimensional face of $ Q $ with at least $ 4 $ vertices, this yields
    a contradiction to Lemma \ref{lem:three_points}.

    \item If $ Q $ is of type $ C $, then it has $7$ vertices and thus no two of them are projected on the same point.
    Note that six vertices of $ Q $ form a triangular prism, a contradiction to Lemma \ref{lem:no_prism}.

    \item If $ Q $ is of type $ D $, then due to counting, again no two of its vertices are projected on the same point.
    Label the vertices of $ Q $ by $ w_1,\dotsc,w_7 $ as denoted in Fig.~\ref{fig:three_dim_special} and let $ u_i $
    denote the projection of $ w_i $ for $ i = 1,\dotsc,7 $.

    The lines $ \overline{w_1 w_2} $ and $ \overline{w_3 w_4} $ are not skew since they lie in the same facet of $ Q $.
    By condition~(\ref{cond:parallel}), the lines $ \overline{w_1 w_2} $ and $ \overline{w_3 w_4} $ are not parallel,
    since otherwise the lines $ \overline{u_1 u_2} $ and $ \overline{u_3 u_4} $ are parallel.
    Thus, we have that the lines $ \overline{w_1 w_2} $ and $ \overline{w_3 w_4} $ have a unique
    common point.
    Analogously, one obtains that $ \overline{w_1 w_5} \cap \overline{w_3 w_7} $ and $ \overline{w_2 w_5} \cap
    \overline{w_4 w_6} $ each consists of a single point.

    Moreover, the points $ \overline{w_1 w_2} \cap \overline{w_3 w_4} $, $ \overline{w_1 w_5} \cap \overline{w_3 w_7} $
    and $ \overline{w_2 w_5} \cap \overline{w_4 w_6} $ belong to the intersection of the hyperplane generated by $w_1$,
    $w_2$, $w_5$ and the hyperplane generated by $w_3$, $w_4$, $w_6$, $w_7$;
    i.e. the points $ \overline{w_1 w_2} \cap \overline{w_3 w_4} $, $ \overline{w_1 w_5} \cap \overline{w_3 w_7} $ and $
    \overline{w_2 w_5} \cap \overline{w_4 w_6} $ lie in the same line.
    Hence, $ u_1 $, \ldots, $ u_7 $ violate condition~(\ref{cond:special}).
\end{itemize}

\makeatletter{}\begin{figure}
    \begin{center}
        \begin{tikzpicture}[scale=0.7]
            \coordinate (w1) at (-2,-2);
            \coordinate (w2) at (-2,2);
            \coordinate (w3) at (2,2);
            \coordinate (w4) at (2,-2);
            \coordinate (w5) at (-1,-1);
            \coordinate (w6) at (-1,1);
            \coordinate (w7) at (1,0);
            \node[below, left] at (w1) {$w_4$};
            \node[below, left] at (w2) {$w_3$};
            \node[below, right] at (w3) {$w_7$};
            \node[below, right] at (w4) {$w_6$};
            \node[below, right] at (w7) {$w_5$};
            \node[below, left] at (w5) {$w_2$};
            \node[below, left] at (w6) {$w_1$};
            \draw (w1)--(w2)--(w3)--(w4)--(w1);
            \draw (w5)--(w6)--(w7)--(w5);
            \draw (w1)--(w5);
            \draw (w2)--(w6);
            \draw (w3)--(w7);
            \draw (w4)--(w7);
        \end{tikzpicture}
    \end{center}
    \caption{Treatment of case D.}
    \label{fig:three_dim_special}
\end{figure}
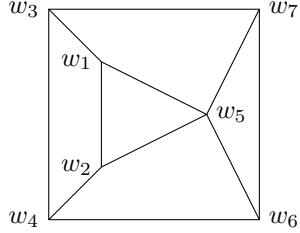

\makeatletter{}\subsubsection{Four-Dimensional Extensions}

In dimension $ 4 $, every polytope with exactly $ 6 $ vertices is the convex hull of a union of a $ 4 $-simplex with one
point. Dualizing this observation yields that every $ 4 $-polytope with exactly $ 6 $ facets is combinatorially
equivalent to a $ 4 $-simplex $ \Delta $ intersected with one closed affine half-space $ H $.
The combinatorial structure of
such an intersection is completely defined by the number $ k $ of the vertices of $ \Delta $ lying on the boundary of $
H $ and the number $ t $ of the vertices of $ \Delta $ lying outside of $ H $. In this case, the total number of
vertices of $ Q $ equals
\[
    (5-t) + (5-k-t) t,
\]
i.e. $ 5 - t $ vertices of $ \Delta $ are vertices of $ Q $ and all other vertices of $ Q $ are intersections of the
edges between the $ t $ vertices of $ \Delta $ lying outside of $ H $ and $ 5-k-t $ vertices of $ \Delta $ lying strictly inside of $ H $. Since the number of vertices of $ Q $ should be at least $ 7 $, there are five possibilities for the
pair $ (k,t) $, namely $ (0,1) $, $ (1,1) $, $ (0,2) $, $ (1,2) $ and $ (0,3) $. In order to finally rule out the
existence of $ Q $, by Lemma \ref{lem:no_prism}, it suffices to show that in each of these cases, $ Q $ contains a
triangular prism as a facet of which at most one pair of vertices is projected onto the same point.

\begin{itemize}
    \item $ (k,t) \in \{ (0,1), \, (1,1), \, (1,2), \, (0,3) \} $: In these cases, the number of vertices of $ Q $ is at
    most $ 8 $ and thus, at most one pair of vertices of $ Q $ is projected onto the same point. There exists a facet $ F $ of
    $ \Delta $ such that none of its vertices lies on the boundary of $ H $ and one or two of its vertices lie
    outside of $ H $. Consider the facet $ F' := F \cap H $ of $ Q $, which is a $ 3 $-simplex intersected with one
    half-space. In particular, by the choice of $ F $, $ F' $ is a triangular prism.

    \item $ (k,t) = (0,2) $: In this case, $ Q $ has exactly $ 9 $ vertices. Let $ w $ be a vertex of $ Q $ such that
    its projection coincides with the projection of another vertex of $ Q $. Let $ F $ be a facet of $ \Delta $ that
    does not contain $ w $. Then  at most one pair of vertices of the facet $ F \cap H $ of $ Q $ is projected onto the same point. To finish the proof, note that for $ (k,t) = (0,2) $ the intersection of every facet of $\Delta$ with the half-space $H$ is a triangular prism.
\end{itemize}

    \makeatletter{}\section{Higher Dimensional Constructions}

We will end our paper by showing that heptagons in general position are not the only polytopes that force minimum size
extensions to have hidden vertices. In fact, Theorem \ref{thm:product} yields families of polytopes in arbitrary
dimensions for which, in any minimum size extension, at least a constant fraction of its vertices are hidden.

\begin{lem}
    \label{lem:product_minimal_extensions}
    Let $ P \subseteq \RR^p $ be a polytope and $ Q \subseteq \RR^{(p+1)+q} $ a minimum size extension of $ P \times [0,1] $.
    Then, the sets
    \begin{align*}
        F_0 &:= \{ (x,y) \in Q : x \in \RR^{p+1}, \, y \in \RR^q, \, x_{p+1} = 0 \} \\
        F_1 &:= \{ (x,y) \in Q : x \in \RR^{p+1}, \, y \in \RR^q, \, x_{p+1} = 1 \}
    \end{align*}
    are both minimum size extensions of $ P $.
\end{lem}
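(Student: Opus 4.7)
Set $s := \xc(P)$. The plan is to pin down $|\mathrm{facets}(F_i)|$ at exactly $s$ for both $i \in \{0,1\}$ by sandwiching it between matching bounds. First, $F_0$ and $F_1$ are faces of $Q$: since $\pi(Q) = P \times [0,1]$, the inequalities $x_{p+1} \geq 0$ and $x_{p+1} \leq 1$ are valid for $Q$ with both bounds attained, so $F_i = Q \cap \{x_{p+1} = i\}$ is the intersection of $Q$ with a supporting hyperplane. Each $F_i$ projects onto $P$ under the further projection dropping $x_{p+1}$ and is therefore an extension of $P$, giving $|\mathrm{facets}(F_i)| \geq s$.

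For the upper bound, the key tool is the product construction: any extension $R$ of $P$ with $m$ facets yields an extension $R \times [0,1]$ of $P \times [0,1]$ with $m + 2$ facets. Applied to a minimum size extension $R$ of $P$, minimality of $Q$ gives $|\mathrm{facets}(Q)| \leq s + 2$. Combined with the standard face-counting bound $|\mathrm{facets}(F_i)| \leq |\mathrm{facets}(Q)| - a_i$, where $a_i$ denotes the number of facets of $Q$ containing $F_i$, a short case analysis on $a_0, a_1 \geq 1$ closes the argument. If $a_i \geq 2$ for some $i$, the bound immediately gives $|\mathrm{facets}(F_i)| \leq s$. If $a_0 = a_1 = 1$, both $F_0$ and $F_1$ are facets of $Q$, and since $F_0 \cap F_1 = \emptyset$ the facet $F_{1-i}$ does not contribute a facet to $F_i$, again yielding $|\mathrm{facets}(F_i)| \leq |\mathrm{facets}(Q)| - 2 \leq s$.

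The remaining mixed case, say $a_0 = 1$ and $a_1 \geq 2$ (the other direction is symmetric), is ruled out by contradiction. The chain $s \leq |\mathrm{facets}(F_1)| \leq |\mathrm{facets}(Q)| - a_1 \leq s + 2 - a_1$ forces $a_1 = 2$, $|\mathrm{facets}(Q)| = s + 2$, and $|\mathrm{facets}(F_1)| = s$. But $F_0$ is then a facet of $Q$ not containing $F_1$, and $F_0 \cap F_1 = \emptyset$ is not a facet of $F_1$, so at most $s - 1$ of the $s$ facets of $Q$ not containing $F_1$ can contribute a facet to $F_1$, yielding $|\mathrm{facets}(F_1)| \leq s - 1$, a contradiction. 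Thus $|\mathrm{facets}(F_i)| = s$ in every admissible case, so $F_0$ and $F_1$ are minimum size extensions of $P$. The main obstacle is this last contradiction, where the disjointness $F_0 \cap F_1 = \emptyset$ must be played against the tightness of the face-counting bound in a minimum extension.
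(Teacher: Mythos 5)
Your proposal is correct and takes essentially the same route as the paper's proof: the lower bound $f_i \geq \xc(P)$ from $F_i$ being an extension, the upper bound $k \leq \xc(P)+2$ from the product construction, and a facet count that discounts both the facets of $Q$ containing $F_i$ and, via $F_0 \cap F_1 = \emptyset$, the disjoint one. Your mixed-case contradiction is just a repackaging of the paper's step showing that if one of $F_0, F_1$ is a facet then the other must be as well.
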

\begin{proof}
    Note that $ F_0 $ and $ F_1 $ are both extensions of $ P $ and proper faces of $ Q $.
    Let $ k $ denote the number of facets of $ Q $ and $ t $ be the extension complexity of $ P $.
    Clearly, we have that $ k \leq t + 2 $ holds.
    For $ i \in \{0,1\} $ let $ f_i \geq t $ be the number of facets of $ F_i $ and let us define the sets
    \begin{align*}
        \mathcal{C}_i & := \{ F : F \text{ facet of } Q, \, F_i \subseteq F \} \\
        \mathcal{D}_i & := \{ F : F \text{ facet of } Q, \, F_i \cap F = \emptyset \}.
    \end{align*}
    It is straightforward to see that
    \begin{equation}
        \label{ineq:product_bound_number_of_facets}
        f_i \leq k - |\mathcal{C}_i| - |\mathcal{D}_i| \leq (t + 2) - |\mathcal{C}_i| - |\mathcal{D}_i|
    \end{equation}
    holds.
    Thus, it remains to show that $ |\mathcal{C}_i| + |\mathcal{D}_i| \geq 2 $ holds for $ i=0,1 $.
    Clearly, this inequality holds if neither $ F_0 $ is a facet nor is $ F_1 $, since this implies $ |\mathcal{C}_i|
    \geq 2 $ for $ i=0,1 $.

    Towards this end, by symmetry, it remains to consider the case that $ F_0 $ is a facet of $ Q $. Since $ |\mathcal{C}_i| \geq 1 $ for $ i=0,1 $, it is enough to show that in this case we have $ |\mathcal{D}_i| \geq 1 $ for $ i=0,1 $.
    Indeed, since $ F_0 $ and $ F_1 $ are disjoint, we obtain $ F_0 \in \mathcal{D}_1 $ and thus $ |\mathcal{D}_1| \geq 1 $.
    Due to $ t \leq f_1 $ and inequality \eqref{ineq:product_bound_number_of_facets},
    it holds that $ |\mathcal{C}_1| + |\mathcal{D}_1| \leq 2 $ and hence $ |\mathcal{C}_1| = 1 $.
    Thus, $ F_1 $ has to be a facet, too. Moreover, the facet $ F_1 $ is in $ \mathcal{D}_0 $, and thus $ |\mathcal{D}_0| \geq 1 $.
\end{proof}

\begin{thm}
    \label{thm:product}
    Let $ P $ be a convex heptagon in general position and $ Q $ a minimum size extension for $ P \times [0,1]^d $. Then, for
    at least $ \frac{1}{9} $ of the vertices of $ Q $, we have that none of them is projected onto a vertex of $ P
    \times [0,1]^d $.
\end{thm}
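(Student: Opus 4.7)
My plan is to prove the statement by induction on $d$, the base case $d = 0$ being where the constant $\tfrac{1}{9}$ is pinned down. For $d = 0$ we have $P \times [0,1]^0 = P$, so Theorem \ref{thm:additional_vertices_heptagon} immediately gives at least one hidden vertex in any minimum size extension $Q$ of $P$, and it only remains to show $|V(Q)| \leq 9$. I would extract this by revisiting the case analysis in the proof of Theorem \ref{thm:additional_vertices_heptagon}: in dimension three the four combinatorial types A--D have at most eight vertices; in dimension four the worst case is $(k,t) = (0,2)$, giving $3 + 6 = 9$ vertices; in dimension five $Q$ is a simplex and has six vertices. Thus at least $\tfrac{1}{9}$ of the vertices of $Q$ are hidden.

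For the inductive step I would regard $P \times [0,1]^d$ as $P' \times [0,1]$ with $P' := P \times [0,1]^{d-1}$ and apply Lemma \ref{lem:product_minimal_extensions}, obtaining two disjoint faces $F_0, F_1 \subseteq Q$ that are themselves minimum size extensions of $P'$. By the inductive hypothesis, at least a $\tfrac{1}{9}$ fraction of the vertices of each $F_i$ is hidden with respect to the projection onto $P'$. Since the vertex set of $P \times [0,1]^d$ is precisely $V(P') \times \{0,1\}$, such hidden vertices stay hidden when viewed as vertices of $Q$ over $P \times [0,1]^d$. Moreover, every vertex of $Q$ lying outside $F_0 \cup F_1$ projects to a point whose last coordinate lies strictly between $0$ and $1$, and is therefore hidden automatically.

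Setting $n_i := |V(F_i)|$ and $m := |V(Q)| - n_0 - n_1$, the number of hidden vertices of $Q$ is at least $\tfrac{1}{9}(n_0 + n_1) + m$, while $|V(Q)| = n_0 + n_1 + m$. The elementary inequality
\[
    \frac{\tfrac{1}{9}(n_0+n_1) + m}{n_0+n_1+m} \;\geq\; \frac{1}{9},
\]
equivalent to $8m \geq 0$, closes the induction. The only step I expect to demand real care is the base case: one must confirm from the combinatorial enumeration underlying Theorem \ref{thm:additional_vertices_heptagon} that no minimum size extension of a heptagon has more than nine vertices, since this is exactly what calibrates the constant $\tfrac{1}{9}$ in the statement.
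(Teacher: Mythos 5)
Your proposal is correct and follows essentially the same route as the paper: induction on $d$, with the base case $d=0$ obtained from Theorem \ref{thm:additional_vertices_heptagon} together with the vertex-count bound of $9$ extracted from its case analysis, and the inductive step splitting the vertices of $Q$ into those in the two faces given by Lemma \ref{lem:product_minimal_extensions} (hidden in at least a $\tfrac19$ fraction by induction, using that the vertices of $P'\times[0,1]$ are $V(P')\times\{0,1\}$) and the remaining vertices with last coordinate strictly between $0$ and $1$, which are automatically hidden. The concluding averaging inequality is the same as the paper's $\tfrac19(|V_0|+|V_1|)+|V_\star|\geq\tfrac19|V|$.
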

\begin{proof}
    We will prove the statement by induction over $ d \geq 0 $. In the case of $ d = 0 $, by Theorem
    \ref{thm:additional_vertices_heptagon} and its proof, we know that $ Q $ has at most $ 9 $ vertices and that at
    least one of them is not projected onto a vertex of $ P $.

    For $ d \geq 1 $, let $ P' := P \times [0,1]^{d-1} \subseteq \RR^{d+1} $ and $ Q $ be a minimum size extension of $ P
    \times [0,1]^d = P' \times [0,1] \subseteq \RR^{d+2} $. Observe that the vertex set of $P' \times [0,1]$ is the  cartesian product of the vertex set of $P'$ and the set $\{0,1\}$. Let us partition the set $ V $ of vertices of $ Q $ into the
    three sets
    \begin{align*}
        V_0 & := \{ (x,y) \in V : x \in \RR^{d+2}, \, y \in \RR^q, \, x_{d+2} = 0 \} \\
        V_1 & := \{ (x,y) \in V : x \in \RR^{d+2}, \, y \in \RR^q, \, x_{d+2} = 1 \} \\
        V_{\star} & := \{ (x,y) \in V : x \in \RR^{d+2}, \, y \in \RR^q, \, 0 < x_{d+2} < 1 \}
    \end{align*}
    Clearly, no vertex in $ V_{\star} $ is projected onto a vertex of $ P' \times [0,1] $. By Lemma
    \ref{lem:product_minimal_extensions}, $ \conv(V_0) $ and $ \conv(V_1) $ are minimum size extensions of $ P' $. By
    induction, for at least $ \frac{1}{9} (|V_0| + |V_1|) $ of $ V_0 \cup V_1 $, we have that none of them is projected
    onto a vertex of $ P' \times [0,1] $. Thus, the number of vertices that are not projected onto a vertex is at
    least $ \frac{1}{9} (|V_0| + |V_1|) + |V_{\star}| \geq \frac{1}{9} |V| $.
\end{proof}
 
    \makeatletter{}\section*{Acknowledgements}
\noindent
We would like to thank Samuel Fiorini for valuable comments on this work.

        \bibliographystyle{plain}

\end{document}